\theoremstyle{plain}
\newtheorem{theorem}{Theorem}
\newtheorem{lemma}{Lemma}
\newtheorem*{theo*}{Theorem}
\theoremstyle{definition}
\newtheorem{definition}{Definition}
\newtheorem*{definition*}{Definition}
\newtheorem{remark}{Remark}
\newcounter{caseinproof}
\def\NN{{\mathbb N}}
\def\kk{{\Bbbk}}
\def\CC{{\mathbb C}}
\def\CM{\mathcal C}
\def\SAut{\mathop{\rm SAut}}
\def\reg{\mathop{\rm reg}}
\def\rk{\mathop{\rm rk}}
\def\Diag{\mathop{\rm Diag}}
\def\ll1{l_{\lambda}^{-1}(1)}
\def\lm1{l_{\mu}^{-1}(1)}
\begin{document}

\title{Infinite transitivity for Calogero-Moser spaces}
\author%
{Karine Kuyumzhiyan}
\address{National Research University Higher School of Economics, Usacheva~6, Moscow, Russia} 
\email{karina@mccme.ru}
\thanks{The study has been funded by the Russian Academic Excellence Project ``5-100''}

%
\begin{abstract} 
We prove the conjecture of Berest-Eshmatov-Eshmatov by showing that the group of automorphisms of a product of  Calogero-Moser spaces~$\CM_{n_i}$, where the $n_i$ are pairwise distinct, acts $m$-transitively for each~$m$.
\end{abstract}
\maketitle

\section{Introduction}
For affine algebraic varieties, their automorphism groups are usually small. However, if they are rich, such varieties and their automorphims groups become objects of intensive study. If an automorphism group is infinite dimensional, it may satisfy the property called {\it infinite transitivity}: for any $m\in \NN$ the group can map any $m$-tuple of points of the variety to any other $m$-tuple of points. We study Calogero-Moser spaces and their products and show that their automorphism groups are infinitely transitive.
\begin{definition}
The Calogero-Moser space $\CM_n$ is 
$$
\CM_n:=\{(X,Y)\,\in\, Mat_n(\CC)\times Mat_n(\CC)\; \colon \; \rk([X,Y]+I_n)=1\} /\!\!/ PGL_n(\CC),
$$
where $PGL_n(\CC)$ acts via $g. (X,Y)=(gXg^{-1}, gYg^{-1})$. 
\end{definition}

Calogero-Moser spaces play an important role in Representation Theory. It is known that $\CM_n$ is a smooth irreducible affine algebraic variety of dimension~$2n$, see Wilson~\cite{Wil}. It is rational, see~\cite[Prop. 1.10]{Wil} and~\cite[Remark 5]{Pop1}. It carries a symplectic structure, see~\cite{Et}. It is a particular case of a Nakajima quiver variety. It appears as a partial compactification of the Calogero-Moser integrable system.

\begin{definition}
We denote by~$G$ the group generated by two kinds of transformations.
\begin{equation}
(X,Y)\mapsto (X+p(Y), Y), \;p \mbox{ is a polynomial in one variable}, \label{triang_Y}
\end{equation}
\begin{equation}
(X,Y)\mapsto (X, Y+q(X)),\; q \mbox{ is a polynomial in one variable}. \label{triang_X}
\end{equation}
It is isomorphic to the group of automorphisms of the first Weyl algebra~\cite{Dixmier, ML}. 
\end{definition}

Formulae~\eqref{triang_Y} and~\eqref{triang_X} can be used to define the $G$-action on $Mat_n(\CC)\times Mat_n(\CC)$. This action descends to~$\CM_n$. To verify this, check two things. First, formulae~\eqref{triang_Y} and~\eqref{triang_X} agree with the $PGL_n(\CC)$-action. Second, the obtained points remain inside~$\CM_n$. Indeed, $[X+p(Y),Y]=[X,Y]=[X,Y+q(X)]$, hence,
$$
\rk([X+p(Y),Y]+I_n)=\rk([X,Y+q(X)]+I_n)=\rk([X,Y]+I_n)=1.
$$

\begin{theorem}(~\cite[Theorem~1]{BEE})
For each $n\geqslant 1$, the action of~$G$ on~$\CM_n$ is doubly transitive. 
\end{theorem}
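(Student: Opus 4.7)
My plan is to reduce double transitivity to single transitivity together with transitivity of a suitable stabilizer. The single transitivity of the $G$-action on $\CM_n$ is a result of Berest-Wilson, which I shall assume. Given two pairs $(P_1, P_2)$ and $(Q_1, Q_2)$ in $\CM_n \times \CM_n$ with $P_1 \neq P_2$ and $Q_1 \neq Q_2$, I first apply some $g_1 \in G$ with $g_1 \cdot P_1 = Q_1$; replacing $P_2$ by $g_1 \cdot P_2$ then reduces the problem to showing that $\Stab_G(Q_1)$ acts transitively on $\CM_n \setminus \{Q_1\}$.

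To analyse this stabilizer action I will work on the dense open subset $U \subset \CM_n$ of points admitting a representative $(X, Y)$ with $Y$ diagonal having pairwise distinct entries $y_1, \ldots, y_n$. On $U$, after normalizing the $PGL_n$-gauge so that the rank-one generator of $[X,Y] + I_n$ is the all-ones vector, a point is parameterized by $(x_i, y_i)_{i=1}^n$ modulo the $S_n$-action: the off-diagonal entries $X_{ij} = 1/(y_j - y_i)$ are forced, while the $x_i := X_{ii}$ are free. Fix a basepoint $Q_1 \in U$ with $Y$-eigenvalues $y_1^0, \ldots, y_n^0$. Then for every polynomial $p$ vanishing on $\{y_1^0, \ldots, y_n^0\}$, the transformation~\eqref{triang_Y} lies in $\Stab_G(Q_1)$. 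A parallel family is obtained after a $PGL_n$-gauge change diagonalizing the $X$-component of $Q_1$: polynomials $q$ vanishing on the eigenvalues of that component give elements of $\Stab_G(Q_1)$ of type~\eqref{triang_X}.

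Now set $P_2' = g_1 \cdot P_2$. Assuming $P_2' \in U$ and that its $Y$-eigenvalues $\{y_i'\}$ are disjoint from $\{y_j^0\}$, Lagrange interpolation yields a polynomial $p$ vanishing on $\{y_j^0\}$ with arbitrarily prescribed values on $\{y_i'\}$; applying~\eqref{triang_Y} with this $p$ moves the $x$-coordinates of $P_2'$ to any targets while fixing $Q_1$. Symmetrically, the \eqref{triang_X}-type stabilizer elements described above move the $y$-coordinates of $P_2'$ when its $X$-eigenvalues are disjoint from those of $Q_1$. Alternating such $x$- and $y$-adjustments, and if necessary inserting an intermediate point to decouple the eigenvalue multisets, steers $P_2'$ to the prescribed target $Q_2$.

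The principal obstacle I anticipate is handling degenerate configurations: if $P_2'$ does not lie in $U$, or if some of its eigenvalues collide with those of $Q_1$, the interpolation step fails as stated. One must therefore first exhibit a stabilizer element that moves $P_2'$ into general position relative to $Q_1$ while preserving $Q_1$ itself. Showing that $\Stab_G(Q_1)$ is rich enough to achieve this --- for example, by verifying that the loci of ``bad'' second points form proper closed subvarieties that cannot be $\Stab_G(Q_1)$-invariant --- is the technical heart of the argument and the step I expect to require most care.
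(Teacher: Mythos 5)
First, note that this theorem is not proved in the present paper at all: it is quoted from \cite{BEE} and used as an input to Theorem~\ref{MainTheorem}, so there is no internal proof to compare against. Your reduction (transitivity plus transitivity of $\Stab_G(Q_1)$ on the complement) and your stabilizer mechanism (polynomials vanishing on the spectrum of a component of $Q_1$, combined with Lagrange interpolation on the disjoint spectrum of the second point) are sound in themselves; they are precisely the diagonalizable-spectrum version of the tools this paper develops in Lemmas~\ref{Form}--\ref{RefineInterp} and deploys in Step~3 of the main proof. By contrast, the actual proof in \cite{BEE} anchors the stabilizer at a point whose $X$-component lies in the fiber of $\Upsilon_1$ over a nilpotent Jordan block, so that $X^n=0$ and every polynomial divisible by $x^n$ fixes the base point (see the unnumbered remark after Lemma~\ref{Surjectivity}). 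So your route is genuinely different in flavor, and closer to this paper's own machinery than to \cite{BEE}'s.

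The proposal is nevertheless incomplete, and the gap is exactly where you say you expect difficulty: putting $P_2'$ in general position relative to $Q_1$ (both components diagonalizable, spectra disjoint from those of $Q_1$) \emph{using only elements of $\Stab_G(Q_1)$}. Your suggested fix --- that the loci of bad second points are proper closed subvarieties ``that cannot be $\Stab_G(Q_1)$-invariant'' --- is an unsubstantiated heuristic: a point stabilizer can perfectly well preserve a proper closed subvariety, and nothing you have written rules this out for, say, the locus of points sharing an eigenvalue with $Q_1$, or the non-diagonalizable locus. Note also that you cannot borrow the paper's own technique for this step: in Step~2 of the proof of Theorem~\ref{MainTheorem}, disjointness of spectra is arranged by \emph{invoking two-transitivity} (move the pair to a fixed pair with disjoint spectra, then perturb the parameter $t$ as in Remark~\ref{Resultant}), so importing that argument here would be circular. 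To close the gap one needs an explicit construction --- e.g.\ a stabilizer element of type~\eqref{triang_Y} with $p$ divisible by the minimal polynomial of the $Y$-component of $Q_1$ that simultaneously diagonalizes and re-spreads the spectrum of the $X$-component of $P_2'$ (a constrained analogue of Lemma~\ref{exists_poly}), together with a parameter-perturbation argument --- and this is the technical heart that the proposal leaves unproved.
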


The conjecture in~\cite{BEE} says, in particular, that~$\CM_n$ has an infinitely transitive action of its automorphism group. It is proved below in Theorem~\ref{MainTheorem}$a)$.

There is a more general class of varieties: for any pairwise distinct integers $n_1$, $n_2, \ldots$, $n_k$ one can consider the product of the corresponding Calogero-Moser spaces 
\begin{equation}\label{product_of_Cn}
\CM_{n_1}\times \CM_{n_2}\times \ldots \times \CM_{n_k}.
\end{equation}
 The group~$G$ acts diagonally on this product. It also acts on
 \begin{equation}\label{disjunion_of_Cn}
 \CM_{n_1}\sqcup \CM_{n_2}\sqcup \ldots \sqcup \CM_{n_k}.
 \end{equation}
  
 Moving a finite number of points on the product~\eqref{product_of_Cn} can be seen as moving a finite number of points on $\CM_{n_1}\sqcup \CM_{n_2}\sqcup \ldots \sqcup \CM_{n_k}$.
For these actions, we consider the property of {\it collective infinite transitivity}. 
\begin{definition}\label{def_of_collective}
We say that the $G$-action on~\eqref{product_of_Cn} or on~\eqref{disjunion_of_Cn} is {\it collectively infinite transitive} if for any integers $m_1$, $m_2, \ldots$, $m_k$ and for any two tuples of $m_1$ points on the first variety $\CM_{n_1}$, $m_2$ points on the second variety $\CM_{n_2}$, etc., $m_k$ points on the $k$th variety $\CM_{n_k}$ there exists an element of~$G$ which simultaneously maps the first tuple to the second tuple.
\end{definition}

\begin{theorem}~(\cite[Theorem~2]{BEE})
For any pairwise distinct natural numbers $(n_1,n_2, \ldots,n_k)\in\NN^k$, the diagonal action of~$G$ on $\CM_{n_1}\times \CM_{n_2}\times \ldots \times \CM_{n_k}$ is transitive.
\end{theorem}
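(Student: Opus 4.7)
The plan is induction on~$k$, with the base case $k=1$ being the transitivity of~$G$ on~$\CM_n$ from the previous theorem. For the inductive step, assume transitivity on $\CM_{n_1}\times\cdots\times\CM_{n_{k-1}}$. Given two tuples $P=(p^{(1)},\ldots,p^{(k)})$ and $Q=(q^{(1)},\ldots,q^{(k)})$, the inductive hypothesis yields an element of~$G$ aligning their first $k-1$ entries. This reduces the problem to the following: find an element of~$G$ that stabilizes a fixed tuple in $\CM_{n_1}\times\cdots\times\CM_{n_{k-1}}$ and sends a prescribed point of~$\CM_{n_k}$ to any other.

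To construct such stabilizing elements I would examine the factor-by-factor action of~\eqref{triang_Y} and~\eqref{triang_X}. A transformation of type~\eqref{triang_Y} with polynomial~$p$ acts trivially on the $i$-th factor precisely when $p$ is a multiple of the minimal polynomial $\mu_{Y_i}$ of~$Y_i$; so the stabilizing elements of type~\eqref{triang_Y} correspond to polynomials in the principal ideal $(L)\subset\CC[t]$ with $L=\mathrm{lcm}(\mu_{Y_1},\ldots,\mu_{Y_{k-1}})$, and act on the $k$-th factor as $(X_k,Y_k)\mapsto(X_k+L(Y_k)r(Y_k),Y_k)$ for arbitrary $r\in\CC[t]$. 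Symmetrically, with $M=\mathrm{lcm}(\mu_{X_1},\ldots,\mu_{X_{k-1}})$, the stabilizing elements of type~\eqref{triang_X} yield $(X_k,Y_k)\mapsto(X_k,Y_k+M(X_k)s(X_k))$.

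The central step is showing that these restricted shears act transitively on~$\CM_{n_k}$. My plan is a genericity reduction: arrange first a configuration in which the spectrum of~$Y_k$ is disjoint from those of $Y_1,\ldots,Y_{k-1}$, and analogously for~$X_k$. In such a configuration, $L(Y_k)$ is invertible in the finite-dimensional algebra $\CC[Y_k]$, so $\{L(Y_k)r(Y_k):r\in\CC[t]\}$ exhausts $\CC[Y_k]$; likewise for $M(X_k)$ in $\CC[X_k]$. Consequently the restricted transformations on the $k$-th factor are as powerful as the full generators featuring in Theorem~1, and transitivity on $\CM_{n_k}$ follows from the previous theorem.

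The chief obstacle is achieving the genericity condition while the first $k-1$ coordinates are already aligned. Because shears~\eqref{triang_Y} preserve the spectrum of~$Y$ while shears~\eqref{triang_X} preserve that of~$X$, the condition ``disjoint spectra'' is not $G$-invariant and the two generator types must be interleaved carefully. This is precisely where the pairwise distinctness of the~$n_i$ must enter essentially: if $n_i=n_j$ for some $i\ne j$, the diagonal in $\CM_n\times\CM_n$ would be a $G$-invariant obstruction. I would attempt this step by first establishing transitivity on an open dense ``generic'' stratum of configurations with disjoint spectra, and then extending to the full product via an open-orbit argument combined with one final application of the inductive hypothesis to absorb any residual misalignment.
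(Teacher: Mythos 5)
First, a point of comparison: the paper does not prove this statement at all --- it is quoted from~\cite[Theorem~2]{BEE} and used as an input to Theorem~\ref{MainTheorem} --- so there is no internal proof to measure your argument against, and I assess it on its own terms.

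Your stabilizer computation is correct and natural: the shears~\eqref{triang_Y} fixing the first $k-1$ points as pairs of matrices are exactly those with $p$ in the ideal generated by $L=\mathrm{lcm}(\mu_{Y_1},\dots,\mu_{Y_{k-1}})$, and when the spectrum of $Y_k$ avoids the roots of $L$ the element $L(Y_k)$ is a unit of $\CC[Y_k]\cong\CC[t]/(\mu_{Y_k})$, so these restricted shears realize every shear on the $k$th factor at that instant. Nevertheless the proposal has genuine gaps. (1)~The hypothesis that the $n_i$ are pairwise distinct is never actually used; since the statement is false without it, the argument cannot be complete, and the missing use lies precisely in the step you defer: moving an arbitrary configuration into the disjoint-spectra stratum. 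The paper's own device for producing disjoint spectra (Step~2 of the proof of Theorem~\ref{MainTheorem}) invokes the $2$-transitivity results of~\cite{BEE}, i.e., the very theorem you are proving, so you need an independent mechanism here and none is supplied. (2)~Even starting from a generic configuration, the claim that the restricted shears are ``as powerful as the full generators'' holds only momentarily: after one restricted shear of type~\eqref{triang_Y} the spectrum of $X_k$ changes and may meet the roots of $M$, so the next type~\eqref{triang_X} step is no longer unrestricted. Deforming each step by a parameter $t$ as in Remark~\ref{Resultant} restores genericity only at the price of moving the endpoint off the target (the word furnished by Theorem~1 hits the target only at $t=1$), so exactness and genericity pull against each other; the paper resolves the analogous tension by using genericity merely to set up one final \emph{exact} interpolation shear (Lemma~\ref{RefineInterp}), and your sketch has no analogue of that closing move. (3)~The concluding ``open-orbit argument'' is not a valid step: $G$ is not an algebraic group acting with open orbits, and transitivity on a dense stratum extends to the whole space only after one shows that every orbit meets that stratum --- which is again the unproved point~(1).
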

If $n_i=n_j$, then $\CM_{n_i}\times \CM_{n_j}$ has a diagonal subvariety which remains invariant under the diagonal $G$-action.

The Conjecture in~\cite{BEE} states that the $G$-action on $\CM_{n_1}\times \CM_{n_2}\times \ldots \times \CM_{n_k}$  is collectively  infinitely transitive. We prove this conjecture in Theorem~\ref{MainTheorem}$b)$.

The key ingredient of the proof is that, whenever $X$-components of the given points have pairwise coprime minimal polynomials, the given points can be moved independently via automorphisms of the form~\eqref{triang_X}.


The author is grateful to Michail Zaidenberg for pointing out this problem and for many useful remarks, to Ivan Arzhantsev for valuable comments, to Yuri Berest for pointing out some inaccuracy in the first version of the text, to Michail Bershtein, Alexander Perepechko, and Misha Verbitsky for encouraging. The author thanks Institut Fourier, Grenoble, for its hospitality.

\section{Geometry of Calogero-Moser spaces and their automorphisms}
We recall here some facts on the geometry of~$\CM_n$ from~\cite[Sec. 1]{Wil} and then  strengthen them to apply to the products of Calogero-Moser spaces of the forms~\eqref{product_of_Cn} and~\eqref{disjunion_of_Cn}. We also use results of Berest and Wilson~\cite{BW}.


\begin{lemma}\label{Form}(\cite[Prop. 1.10]{Wil})
If $(X,Y)\in \CM_n$ and if $X$ is diagonal, then the eigenvalues $x_1,x_2,\ldots,x_n$ of~$X$ are distinct. The non-diagonal entries of
$Y$ have the form 
$$
y_{ij}=1/(x_i-x_j).
$$
\end{lemma}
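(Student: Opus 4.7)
The plan is to work directly with the entries of the matrix $[X,Y]+I_n$ when $X$ is diagonal, exploit the rank-one condition, and use the residual $PGL_n(\CC)$-action given by the stabilizer of a diagonal matrix to normalize $Y$.

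First I would compute $[X,Y]_{ij}$ explicitly. Writing $X=\mathrm{diag}(x_1,\dots,x_n)$, a direct calculation gives $(XY-YX)_{ij}=(x_i-x_j)y_{ij}$, so the $(i,j)$-entry of $M:=[X,Y]+I_n$ equals $(x_i-x_j)y_{ij}$ for $i\neq j$ and equals $1$ for $i=j$.

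To prove distinctness of the eigenvalues, I would argue by contradiction. Suppose $x_i=x_j$ for some pair $i\neq j$. Then both off-diagonal entries of the $2\times 2$ principal submatrix of $M$ indexed by $\{i,j\}$ vanish, while both diagonal entries equal $1$; hence this submatrix is the identity and has rank $2$. This forces $\rk M\geq 2$, contradicting $\rk M=1$.

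Once distinctness is established, I would use the rank-one condition to pin down the shape of $Y$. A rank-one matrix $M$ can be written as $M_{ij}=a_i b_j$ for some vectors $a,b\in\CC^n$. The equations $M_{ii}=1$ force $a_ib_i=1$, so after rescaling we can take $b_i=1/a_i$, and then $M_{ij}=a_i/a_j$. Comparing with the formula $M_{ij}=(x_i-x_j)y_{ij}$ gives $y_{ij}=a_i/\bigl(a_j(x_i-x_j)\bigr)$. At this stage I would invoke the stabilizer of a diagonal $X$ inside $PGL_n(\CC)$, namely the diagonal torus acting by conjugation: for $D=\mathrm{diag}(d_1,\dots,d_n)$ we have $(DYD^{-1})_{ij}=(d_i/d_j)y_{ij}$. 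Choosing $d_i=1/a_i$ kills the factor $a_i/a_j$ and yields precisely $y_{ij}=1/(x_i-x_j)$. The only mild subtlety, rather than a real obstacle, is to record that this normalization is performed within the $PGL_n(\CC)$-orbit of the chosen representative, so the formula holds at the level of $\CM_n$ for a canonical choice of representative with diagonal $X$.
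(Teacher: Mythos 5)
Your proof is correct and complete. The paper itself gives no proof of this lemma --- it is quoted directly from Wilson (\cite[Prop.~1.10]{Wil}) --- and your argument is essentially the standard one behind that citation: computing $([X,Y]+I_n)_{ij}=(x_i-x_j)y_{ij}$ off the diagonal and $1$ on the diagonal, deriving distinctness of the $x_i$ from the rank-one condition via the $2\times 2$ identity submatrix, and writing the rank-one matrix as $a_ib_j$ with $a_ib_i=1$ to solve for $y_{ij}$. You are also right to flag the one genuine subtlety, which the paper's bare statement glosses over: the formula $y_{ij}=1/(x_i-x_j)$ holds only after conjugating by the diagonal torus (the stabilizer of a diagonal $X$ with distinct eigenvalues inside $PGL_n(\CC)$), i.e.\ for a suitably normalized representative of the point of $\CM_n$; your choice $d_i=1/a_i$ makes this normalization explicit.
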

If $X$ is diagonalizable but not diagonal, 
the point $(X,Y)$ of $\CM_n$ has another representative $(AXA^{-1}, AYA^{-1})$ where the new $X$ is diagonal and we can express all the non-diagonal entries of~$Y$ in entries of~$X$.


\begin{lemma}(\cite[Lemma 10.2]{BW})\label{interpolation}
If $(X,Y)\in \CM_n$ and $(X,Y')\in \CM_n$ with $X$ diagonal, then there exist a polynomial~$p$ in one variable such that $(X,Y)\mapsto (X,Y+p(X))=(X,Y')$.
\end{lemma}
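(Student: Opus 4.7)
The plan is to reduce the statement to a one-step Lagrange interpolation argument. The key observation is that Lemma~\ref{Form} already pins down the off-diagonal part of any representative of a point $(X,Z)\in\CM_n$ whose first coordinate is the given diagonal matrix $X$: after normalizing by the residual diagonal torus in $PGL_n$ that stabilizes $X$, the off-diagonal entries are forced to be $z_{ij}=1/(x_i-x_j)$.

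Concretely, I would first apply Lemma~\ref{Form} to both $(X,Y)$ and $(X,Y')$. Since $X$ is diagonal with pairwise distinct eigenvalues $x_1,\dots,x_n$, the stabilizer of $X$ in $PGL_n(\CC)$ is the diagonal torus modulo scalars. Conjugation by $\mathrm{diag}(a_1,\dots,a_n)$ sends the $(i,j)$-entry of the second coordinate to $(a_i/a_j)\cdot(\text{old entry})$, so by an independent choice of torus element for each of $(X,Y)$ and $(X,Y')$ I can bring both representatives to the form in which \emph{every} off-diagonal entry equals $1/(x_i-x_j)$. After this normalization, the two representatives share their off-diagonal parts, so the matrix $D:=Y'-Y$ is diagonal.

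Since $x_1,\dots,x_n$ are pairwise distinct, Lagrange interpolation produces a polynomial $p$ of degree at most $n-1$ with $p(x_i)=D_{ii}$ for every $i$. Then $p(X)=D$ as matrices, hence $Y+p(X)=Y'$. Thus the automorphism~\eqref{triang_X} associated to this $p$ sends the representative $(X,Y)$ to $(X,Y')$, and since \eqref{triang_X} descends to a well-defined transformation of $\CM_n$, it sends the class of $(X,Y)$ to the class of $(X,Y')$, as required.

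There is no serious obstacle in this argument: the entire content is Lagrange interpolation applied to the diagonal matrix $D$. The only point that requires a moment's care is the bookkeeping in the normalization step, namely that the two representatives can be independently put into the form of Lemma~\ref{Form} without the action on $Y$ interfering with the action on $Y'$. This is immediate because each normalization is performed inside its own equivalence class and only rescales off-diagonal entries.
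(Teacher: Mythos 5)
Your proof is correct and follows essentially the same route as the paper's: invoke Lemma~\ref{Form} to see that $Y'-Y$ is diagonal, then apply Lagrange interpolation at the distinct eigenvalues $x_1,\dots,x_n$. The only difference is that you spell out the normalization by the diagonal torus stabilizing $X$ (which the paper leaves implicit in its statement of Lemma~\ref{Form}), a worthwhile clarification but not a different argument.
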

\begin{proof}
By Lemma~\ref{Form}, matrices $Y$ and $Y'$ may differ only in diagonal entries, denote them by~$y_{11},\ldots,y_{nn}$ and~$y'_{11},\ldots,y'_{nn}$. Let $X=\Diag (x_1,\ldots,x_n)$. Since all the $x_i$ are different, there exists an interpolation polynomial  $p(x)$ such that $p(x_i)=y'_{ii}-y_{ii}$. But $p(X)=\Diag (p(x_1),\ldots, p(x_n))$ and hence $Y+p(X)=Y'$.
\end{proof}

%

\begin{remark}\label{modulo_char}
Let $(X_0,Y_0)\in\CM_n$. If a polynomial in one variable $q(x)$ is divisible by the minimal polynomial of~$Y_0$, 
then the automorphisms $(X,Y)\mapsto (X+p(Y), Y)$ and $(X,Y)\mapsto (X+p(Y)+q(Y), Y)$ map $(X_0,Y_0)$ to the same point.
\end{remark}

\begin{lemma}\label{CollectiveAutomorphism}
Suppose that square matrices $X_1$, $X_2$, $\ldots$, $X_m$ (possibly of different sizes) have pairwise coprime minimal polynomials. Take $(X_1, Y_1)$,  $(X_2, Y_2)$, $\ldots$, $(X_m,Y_m)$, where each~$Y_i$ is a square matrix of the same size as~$X_i$,  and polynomials $p_1,p_2, \ldots, p_m \in \kk[x]$.
Then there exists a polynomial $p\in\kk[x]$ such that for each~$i$ we have
$$
Y_i+p_i(X_i)=Y_i+p(X_i).
$$ 
\end{lemma}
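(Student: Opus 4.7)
The equation $Y_i+p_i(X_i)=Y_i+p(X_i)$ is equivalent to $p(X_i)=p_i(X_i)$, so the $Y_i$ play no role and the lemma reduces to a purely polynomial question: find $p\in\kk[x]$ such that $(p-p_i)(X_i)=0$ for every $i$. The plan is to reformulate this as a system of congruences in $\kk[x]$ and then invoke the Chinese Remainder Theorem.

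Let $\mu_i\in\kk[x]$ denote the minimal polynomial of $X_i$. For any $f\in\kk[x]$ we have $f(X_i)=0$ if and only if $\mu_i$ divides $f$; hence the condition $(p-p_i)(X_i)=0$ is equivalent to the congruence $p\equiv p_i\pmod{\mu_i}$. The lemma therefore asks for a single $p\in\kk[x]$ satisfying
\[
p\equiv p_1\pmod{\mu_1},\quad p\equiv p_2\pmod{\mu_2},\quad\ldots,\quad p\equiv p_m\pmod{\mu_m}.
\]

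By hypothesis the polynomials $\mu_1,\ldots,\mu_m$ are pairwise coprime, so the natural map
\[
\kk[x]\big/(\mu_1\cdots\mu_m)\;\longrightarrow\;\prod_{i=1}^{m}\kk[x]\big/(\mu_i)
\]
is an isomorphism by the Chinese Remainder Theorem for the principal ideal domain $\kk[x]$. Picking a preimage of the tuple $(p_1\bmod\mu_1,\ldots,p_m\bmod\mu_m)$ yields the desired polynomial $p$.

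There is no real obstacle here: the content of the lemma is exactly CRT applied to the ideals generated by the minimal polynomials, and the hypothesis of pairwise coprimality is precisely what CRT requires. If one prefers to avoid invoking CRT as a black box, one can give an explicit construction by setting $M=\mu_1\cdots\mu_m$, choosing $a_i,b_i\in\kk[x]$ with $a_i\mu_i+b_i(M/\mu_i)=1$, and letting $p=\sum_i p_i\, b_i(M/\mu_i)$; one checks directly that $p\equiv p_i\pmod{\mu_i}$ for each $i$.
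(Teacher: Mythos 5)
Your proof is correct and follows essentially the same route as the paper: reduce the condition $p(X_i)=p_i(X_i)$ to the congruence $p\equiv p_i\pmod{\mu_i}$ via the minimal polynomial, then apply the Chinese Remainder Theorem to the pairwise coprime $\mu_i$. The explicit B\'ezout construction you add at the end is a nice bonus but not needed.
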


\begin{proof}
By Remark~\ref{modulo_char}, each $p_i$ is defined modulo the minimal polynomial $\chi_i$ of $X_i$. Since $\chi_1$, $\chi_2,\ldots$,$\chi_m$ are pairwise coprime, by the Chinese remainders theorem there exists a polynomial $p$ such that for each~$i=1,2,\ldots,m$ the polynomial $p-p_i$ is divisible by~$\chi_i$. 
\end{proof}

\begin{lemma}[Refinement of Lemma~\ref{interpolation}]\label{RefineInterp}
Take two $m$-tuples of points of $\CM_{n_1}\sqcup \CM_{n_2}\sqcup \ldots \sqcup \CM_{n_k}$
$$
(X_1,Y_1), (X_2,Y_2),\ldots,(X_m,Y_m)
$$
and
$$
(X_1,Y'_1), (X_2,Y'_2),\ldots,(X_m,Y'_m)
$$
(so, each $\CM_{n_i}$ contains an even number of chosen points). Suppose that $X_1,\ldots,X_m$ are diagonalizable and have pairwise coprime minimal polynomials (equivalently, diagonalizable and with disjoint spectra). Then there exists a polynomial $p(x)\in\kk[x]$ such that for each~$i$ we have
$$
Y_i+p_i(X_i)=Y'_i.
$$ 
\end{lemma}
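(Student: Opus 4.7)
The plan is to first obtain, for each index $i$ separately, a polynomial $p_i$ carrying $Y_i$ to $Y'_i$ via $Y_i+p_i(X_i)=Y'_i$, and then to glue these local polynomials into a single global polynomial $p$ by means of Lemma~\ref{CollectiveAutomorphism}.

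For a fixed $i$, I would first reduce to the case in which $X_i$ is actually diagonal, which is the hypothesis of Lemma~\ref{interpolation}. Namely, choose $A_i\in GL_{n_i}(\CC)$ such that $A_iX_iA_i^{-1}$ is diagonal, and replace the two chosen representatives by the $PGL$-equivalent pairs $(A_iX_iA_i^{-1},A_iY_iA_i^{-1})$ and $(A_iX_iA_i^{-1},A_iY'_iA_i^{-1})$. Lemma~\ref{interpolation} then yields a polynomial $p_i\in\kk[x]$ with
$$
A_iY_iA_i^{-1}+p_i(A_iX_iA_i^{-1})=A_iY'_iA_i^{-1}.
$$
Since evaluation of a polynomial at a matrix commutes with conjugation, $p_i(A_iX_iA_i^{-1})=A_ip_i(X_i)A_i^{-1}$, and conjugating the equality above by $A_i^{-1}$ gives $Y_i+p_i(X_i)=Y'_i$ back in the original representatives.

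Having produced polynomials $p_1,\ldots,p_m$ that each work individually, I invoke the coprimality hypothesis. The $X_i$ are diagonalizable with disjoint spectra, hence their minimal polynomials are pairwise coprime. Lemma~\ref{CollectiveAutomorphism} therefore provides a single $p\in\kk[x]$ such that $p(X_i)=p_i(X_i)$ as matrices, for every $i$ simultaneously. This $p$ satisfies $Y_i+p(X_i)=Y_i+p_i(X_i)=Y'_i$ for all $i$, which is the claim.

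The only delicate point I anticipate is the passage through the $PGL$-quotient at the start: the equation $Y_i+p_i(X_i)=Y'_i$ is an equation of actual matrices, not of $PGL$-equivalence classes, so one must be careful that Lemma~\ref{interpolation}, stated only for diagonal $X$, does transfer along the auxiliary conjugation by $A_i$. This works precisely because $p_i(\cdot)$ intertwines with conjugation, but it is the one place where a little bookkeeping is required. After this reduction, the whole argument is essentially an application of the Chinese Remainder Theorem packaged inside Lemma~\ref{CollectiveAutomorphism}.
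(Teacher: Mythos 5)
Your proposal is correct and follows essentially the same route as the paper: apply Lemma~\ref{interpolation} to each index $i$ to get individual polynomials $p_i$, then glue them into a single $p$ via Lemma~\ref{CollectiveAutomorphism} (i.e., the Chinese Remainder Theorem). The only difference is that you spell out the reduction from diagonalizable to diagonal $X_i$ by conjugation, a bookkeeping step the paper leaves implicit (having noted earlier that one may pass to a representative with $X$ diagonal).
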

\begin{proof}
First, by Lemma~\ref{interpolation} we choose a polynomial $p_i(x)\in\kk[x]$ such that $Y'_i=Y_i+p(X_i)$ for each $i=1,2,\ldots,m$. Then by Lemma~\ref{CollectiveAutomorphism} we find a polynomial $p(x)$ which works for all~$i$.
\end{proof}

The following lemma is a refined Lemma 10.3 from~\cite{BW}. Its proof is explained in \cite[Lemma 5.6]{Wil} and also in \cite{Shi}, \cite[Prop. 8.6]{SW}.

\begin{lemma}\label{exists_poly}
Let $(X,Y)\in \CM_n$. Then there exists a polynomial $p$ such that the matrix $X+ p(Y)$ is diagonalizable.
\end{lemma}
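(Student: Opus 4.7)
The plan is to find $p$ such that $X+p(Y)$ has $n$ pairwise distinct eigenvalues; this is stronger than diagonalizability and reduces the task to a non-vanishing statement about the discriminant $\Delta$ of the characteristic polynomial of $X+p(Y)$. Since $\Delta$ depends polynomially on the coefficients of $p$, it suffices to exhibit a single polynomial $p$ with $\Delta(p)\neq 0$; generic $p$ in the Zariski sense will then work automatically.

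Conjugating $(X,Y)$ by a suitable element of $PGL_n(\CC)$ --- which does not affect the statement --- I may place $Y$ into Jordan canonical form, with distinct eigenvalues $\mu_1,\ldots,\mu_r$ and largest-block sizes $e_i$ at $\mu_i$. In the regular semisimple case ($r=n$), $Y$ is diagonal with distinct entries $y_1,\ldots,y_n$; by the involution $(X,Y)\mapsto(Y,-X)$ of $\CM_n$ combined with Lemma~\ref{Form} applied to the swapped pair, the rank-one condition forces $X_{ij}=1/(y_j-y_i)$ for $i\neq j$ (after a further diagonal rescaling), while the diagonal entries of $X$ remain free. By Lagrange interpolation I choose $p$ with $p(y_i)$ equal to arbitrarily prescribed values; taking these widely spaced makes $X+p(Y)$ strongly diagonally dominant, and Gershgorin's theorem yields $n$ distinct eigenvalues.

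The general case --- with repeated eigenvalues or non-trivial Jordan blocks of $Y$ --- is the main obstacle. Here the Chinese remainder theorem identifies $\CC[Y]$ with $\bigoplus_i\CC[t]/(t-\mu_i)^{e_i}$, so $p(Y)$ ranges over this finite-dimensional commutative subalgebra. The strategy is a scaling degeneration: for $p_s(t)=st^k$ with $k$ chosen so that the nonzero $\mu_i^k$ are pairwise distinct, the eigenvalues of $X+sY^k$ cluster near the points $s\mu_i^k$ as $|s|\to\infty$, which separates the clusters. Splitting eigenvalues \emph{within} each cluster is the delicate point, and is precisely where the rank-one condition on $[X,Y]+I_n$ enters: it prevents $\CC^n$ from admitting a proper subspace simultaneously invariant under $X$ and $Y$, so $X$ must contain nonzero entries coupling distinct Jordan cells of $Y$. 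A block-by-block computation (in the spirit of \cite[Sec.~5]{Wil}) then shows that these couplings produce nonzero first-order corrections to the within-cluster eigenvalue spacing, so $\Delta(s)$ is not identically zero as a polynomial in $s$. The fully nilpotent subcase (all $\mu_i=0$) is handled by allowing $p$ to carry a nonzero constant term, which shifts the spectrum of $X$ and reduces to the previous situation.
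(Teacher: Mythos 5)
The paper itself does not prove Lemma~\ref{exists_poly}: it is quoted as a refinement of \cite[Lemma 10.3]{BW}, with proofs referenced in \cite[Lemma 5.6]{Wil}, \cite{Shi} and \cite[Prop. 8.6]{SW}. So your proposal is an attempt at an independent proof, and it must be judged on its own completeness.

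The regular semisimple case is handled correctly and rather elegantly: when $Y$ is diagonal with distinct entries, the rank-one condition (after a diagonal conjugation normalizing $[X,Y]+I_n$ to the all-ones matrix) pins down the off-diagonal entries of $X$ as $1/(y_j-y_i)$, Lagrange interpolation lets you prescribe $p(y_1),\dots,p(y_n)$ freely on the diagonal, and Gershgorin then gives $n$ distinct eigenvalues. That part is a complete, elementary argument.

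The problem is that the general case --- $Y$ with repeated eigenvalues or nontrivial Jordan blocks --- is exactly the content of the lemma, and there your argument stops at an assertion. The reduction to irreducibility is fine: the rank-one condition does force that no proper nonzero subspace is invariant under both $X$ and $Y$ (restrict $[X,Y]+I_n=vw^{T}$ to such a subspace and take traces). But the step from ``$X$ has nonzero entries coupling distinct Jordan cells of $Y$'' to ``the discriminant of $X+sY^{k}$ is not identically zero in $s$'' is precisely what needs to be proved, and the phrase ``a block-by-block computation then shows'' is not a proof. Two concrete difficulties are skipped. First, within a single Jordan cell of size $e\geqslant 2$ the leading-order splitting of the $e$ eigenvalues clustered near $s\mu_i^{k}$ is a singular perturbation of a nilpotent block; it is governed by Puiseux expansions whose leading coefficient involves a specific corner entry of $X$ in that cell's basis, and irreducibility of the pair on all of $\CC^n$ does not obviously force that entry to be nonzero. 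Second, when the generalized eigenspace of $\mu_i$ splits into several Jordan cells, the inter-cell couplings enter the characteristic polynomial in a way that could a priori cancel, and no computation is offered to exclude this. Until these points are carried out, the proposal does not replace the cited proofs, which use genuinely different (and nontrivial) machinery.
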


By {\it almost all} we mean a cofinite subset of the set of complex numbers, i.e., all complex numbers but finitely many. 
We prove the following generalization of Lemma~\ref{exists_poly}.


\begin{lemma}\label{EverythingDiagonalizable}
a) Let $(X,Y)\in \CM_n$. Then there exists a polynomial $p$ such that the matrix $X+t\cdot p(Y)$ is diagonalizable for almost all~$t$.

b) Let us fix~$m$, $m\in\NN$, and take an $m$-tuple of points of $\CM_n$. Then one can make all the $2m$ matrices diagonalizable via a composition of $2m$ automorphisms of the forms~\eqref{triang_Y} and~\eqref{triang_X}. 

c) Let us take $m_1$ points on the first variety $\CM_{n_1}$, $m_2$ points on the second variety $\CM_{n_2}$, etc., $m_k$ points on the $k$th variety $\CM_{n_k}$ . Then all the matrices (i.e., $X$- and $Y$-components of our points) can be made diagonalizable via a composition of $2(m_1+m_2+\ldots+m_k)$ automorphisms of the forms~\eqref{triang_Y} and~\eqref{triang_X}.
\end{lemma}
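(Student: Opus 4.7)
The plan is to bootstrap from Lemma~\ref{exists_poly} in three steps, the key device being a one-parameter rescaling of the deforming polynomial.

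For part a), I would take the polynomial $p$ provided by Lemma~\ref{exists_poly}, so that $X+p(Y)$ is diagonalizable. The point $(X+p(Y),Y)$ still lies in $\CM_n$, and Lemma~\ref{Form} (applied after a $PGL_n(\CC)$-conjugation bringing $X+p(Y)$ into diagonal form) shows that any diagonalizable $X$-component of a Calogero--Moser pair has pairwise distinct eigenvalues. Hence the discriminant $\Delta(t)$ of the characteristic polynomial of $X+t\cdot p(Y)$ is a polynomial in $t$ that does not vanish at $t=1$; it therefore vanishes only on a finite set, and for every $t$ outside it the matrix $X+t\cdot p(Y)$ has $n$ distinct eigenvalues and in particular is diagonalizable.

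For part b), I would first diagonalize the $m$ $X$-components by composing $m$ automorphisms of type~\eqref{triang_Y}, then diagonalize the $m$ $Y$-components by composing $m$ automorphisms of type~\eqref{triang_X}; the second round does not disturb the first since transformations of type~\eqref{triang_X} preserve every $X$-component. The inductive hypothesis after $j$ steps is that $X_1,\ldots,X_j$ are diagonalizable with distinct eigenvalues. To handle $X_{j+1}$, apply part a) to $(X_{j+1},Y_{j+1})$ and obtain a polynomial $p$; for each $i\le j$ the family $X_i+t\cdot p(Y_i)$ is diagonalizable with distinct eigenvalues at $t=0$, so its characteristic discriminant is a nonzero polynomial in $t$. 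Choosing $t$ outside the finite union of exceptional sets coming from $X_{j+1}$ and from $X_1,\ldots,X_j$, the single automorphism $(X,Y)\mapsto(X+t\cdot p(Y),Y)$ preserves diagonalizability for $i\le j$ and achieves it for $i=j+1$. For the second round I would apply Lemma~\ref{exists_poly} to $(Y,-X)\in\CM_n$ (which lies in $\CM_n$ because $[Y,-X]=[X,Y]$): this provides a polynomial $q$ with $Y+q(X)$ diagonalizable, and the analogous discriminant and induction argument using automorphisms of type~\eqref{triang_X} completes the job.

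Part c) requires no new idea: matrices from different factors $\CM_{n_i}$ contribute independent discriminant polynomials in $t$, so a single generic $t$ avoids the union of their finitely many bad values, and the inductive procedure applied to the full list of $m_1+\cdots+m_k$ points consumes exactly $2(m_1+\cdots+m_k)$ automorphisms. The main obstacle I anticipate is guaranteeing that previously diagonalized matrices are not ruined by subsequent automorphisms; this is precisely the role of the parameter $t$, since the condition ``the characteristic polynomial has nonzero discriminant'' is Zariski-open in $t$ and at $t=0$ the automorphism acts trivially on the relevant components, so the good set of $t$ is a cofinite subset of $\CC$ for each already-diagonalized point, and finite intersections of cofinite subsets remain cofinite.
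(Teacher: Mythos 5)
Your proposal is correct and follows essentially the same route as the paper: invoke Lemma~\ref{exists_poly}, use Lemma~\ref{Form} to upgrade ``diagonalizable'' to ``simple spectrum'' so that the discriminant/resultant of the characteristic polynomial of $X+t\cdot p(Y)$ is a nonzero polynomial in $t$ (this is exactly the paper's Remark~\ref{Resultant}), and then handle the points one by one, choosing $t$ outside a finite union of bad sets, with a second symmetric round for the $Y$-components. The only addition beyond the paper's text is your explicit justification of the symmetric step via $(Y,-X)\in\CM_n$, which the paper leaves implicit.
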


\begin{remark}\label{Resultant}
Given a matrix, 
the condition of it having simple spectrum can be expressed as the condition of non-vanishing of some polynomial in the matrix entries.
Indeed, we compute the resultant of the characteristic polynomial of the matrix and of its derivative. 
If the resultant is nonzero, then these polynomials have no common roots, hence, the characteristic polynomial of the matrix cannot have multiple roots.  

The condition of having a simple spectrum for $X$ (or, equivalently, for $Y$) implies diagonalizability. Suppose that the matrix $X+t\cdot p(Y)$ is diagonalizable  for some $t=t_0$. Then the above resultant is a nonzero polynomial in~$t$, hence,  its values at almost all $t$ are nonzero and the matrix $X+t\cdot p(Y)$ is diagonalizable for almost all~$t$.
\end{remark}

\begin{proof}
a) Take a polynomial~$p$ as in Lemma~\ref{exists_poly}. By Remark~\ref{Resultant}, the matrix $X+t\cdot p(Y)$ is diagonalizable for almost all~$t$ since it is so for~$t=1$.

b) Using Lemma~\ref{exists_poly}, make $X_1$ diagonalizable.  Then, acting as in a), find a polynomial $p_2$ such that $X_2+p_2(Y_2)$ is diagonalizable. Consider automorphisms $(X,Y)\mapsto (X+t\cdot p_2(Y))$, $t\in \CC$. By Remark~\ref{Resultant}, $X_1$ maps to a diagonalizable matrix via such automorphisms for almost all~$t$  since it is so for~$t=0$. Also, by Remark~\ref{Resultant}, the matrix $X_2$ maps to a diagonalizable matrix for almost all~$t$ since it is so for~$t=1$. For the values of~$t$, we forbid a union of two finite sets hence a finite set. Choose any other~$t$, the images of $X_1$ and $X_2$ are diagonalizable for it. In this way we make all the $X_i$ diagonalizable one by one. Then in the same way we make all the $Y_i$ diagonalizable, while the $X_i$ remain unchanged and hence diagonalizable.

c) The proof is exactly the same as in b).
\end{proof}

There is a map $\Upsilon \colon \CM_n \rightarrow (\CC^n/S_n)\times (\CC^n/S_n)$ which sends $X$ and $Y$ to their spectra, where $S_n$ stands for the symmetric group on an $n$-element set. By $\Upsilon_1$ and $\Upsilon_2$ we mean projections to the first and to the second components, respectively.
One of the key statements is

\begin{lemma}[Prop. 4.15 and Theorem 11.16 in~\cite{EG}]
The map $\Upsilon$ is surjective. 
\end{lemma}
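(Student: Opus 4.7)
The plan is to reduce surjectivity of $\Upsilon$ to an inverse spectral problem on the generic stratum of the base, solve it by a properness argument, and then extend to the full base by a limit argument. Let $U\subset(\CC^n/S_n)^2$ be the open dense subset consisting of pairs whose first entry is a tuple of pairwise distinct numbers, and fix $(\bar x,\bar y)\in U$ with $\bar x=(x_1,\dots,x_n)$ and $\bar y=(y_1,\dots,y_n)$. By Lemma~\ref{Form}, any preimage of $(\bar x,\bar y)$ under $\Upsilon$ can be conjugated so that $X=\Diag(x_1,\dots,x_n)$, forcing the off-diagonal part of $Y$ to be the matrix $Z$ with $Z_{ij}=1/(x_i-x_j)$ for $i\neq j$ and $Z_{ii}=0$. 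Only the diagonal entries $(a_1,\dots,a_n)$ of $Y$ remain free, so the preimage problem reduces to solving $\psi(a)=\bigl(e_1(\bar y),\dots,e_n(\bar y)\bigr)$, where $\psi\colon\CC^n\to\CC^n$ records the non-leading coefficients of the characteristic polynomial of $Y(a):=\Diag(a)+Z$.

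Next I would prove $\psi$ is proper. Expanding $Y(a)^k$ and using cyclic invariance of the trace together with $Z_{ii}=0$, one checks that $\mathop{\rm tr}\bigl(Y(a)^k\bigr)=p_k(a)+R_k(a)$, where $p_k(a)=\sum_i a_i^k$ and $R_k$ has total degree at most $k-2$ in $a$ (the terms involving exactly one $Z$-factor drop out because $Z$ has zero diagonal). Suppose $|a^{(\nu)}|\to\infty$ along a sequence; write $a^{(\nu)}=|a^{(\nu)}|\,v^{(\nu)}$ with $|v^{(\nu)}|=1$ and pass to a convergent subsequence $v^{(\nu)}\to v$. Then boundedness of $\mathop{\rm tr}\bigl(Y(a^{(\nu)})^k\bigr)$ forces $p_k(v)=0$ for $k=1,\dots,n$, so by Newton's identities $v=0$, contradicting $|v|=1$. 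Hence $\psi$ is proper. Its leading part at infinity is the finite $n!$-to-one surjection $a\mapsto(e_1(a),\dots,e_n(a))$, so $\psi$ is also dominant; and a proper dominant polynomial self-map of $\CC^n$ is surjective. Thus $\Upsilon$ is surjective over $U$.

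For an arbitrary $p_0\in(\CC^n/S_n)^2$, approximate it by $p_\nu\in U$ with preimages $(X_\nu,Y_\nu)$ supplied by the previous step. To extract a convergent subsequence in $\CM_n$, one bounds all the $PGL_n$-invariant trace functions $\mathop{\rm tr}\bigl(w(X_\nu,Y_\nu)\bigr)$ for words $w$, since these generate $\CC[\CM_n]$ by a Procesi-type theorem. The unmixed traces $\mathop{\rm tr}(X_\nu^k)$ and $\mathop{\rm tr}(Y_\nu^k)$ are bounded by the spectra alone, while the mixed traces are controlled via the rank-one constraint $\rk([X,Y]+I_n)=1$, which supplies identities reducing long words in $X$ and $Y$ to shorter ones. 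This extension step is the main obstacle: $\CM_n$ is affine and non-compact, so $\Upsilon$ is not automatically proper, and converting the rank-one commutator constraint into concrete bounds on the mixed invariants is where the specific geometry of $\CM_n$ (as opposed to a generic near-commuting matrix variety) really enters; without it one would obtain only dominance, not surjectivity.
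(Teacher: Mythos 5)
The paper does not prove this lemma at all --- it is quoted from Etingof--Ginzburg \cite{EG}, where it is a consequence of the much stronger statement that $\Upsilon$ is a finite flat morphism of degree $n!$, proved by representation-theoretic methods (the deformed Harish-Chandra homomorphism / structure of the spherical subalgebra), so there is no internal proof to compare against. Your argument over the open stratum $U$ is correct and self-contained: the pair $(\Diag(\bar x),\Diag(a)+Z)$ does lie in $\CM_n$ for every $a$ (since $[X,Y]+I_n$ is then the all-ones matrix, of rank one --- a one-line check you should make explicit, as your phrasing ``any preimage can be conjugated to\dots'' presupposes existence rather than constructing it), the trace expansion $\mathop{\rm tr}(Y(a)^k)=p_k(a)+O(|a|^{k-2})$ is right, and properness plus dominance of $\psi$ does give surjectivity onto $\CC^n$. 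By the symmetry $(X,Y)\mapsto(Y^{t},X^{t})$ of $\CM_n$ this also handles the case where $\bar y$ is simple and $\bar x$ arbitrary; notably, this already covers everything the paper actually uses, since Lemma~\ref{Surjectivity} and Step~3 of Theorem~\ref{MainTheorem} only invoke surjectivity over pairs of simple spectra.

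However, as a proof of the lemma as stated the final step is a genuine gap, and you say so yourself. The limit argument requires the approximating preimages $(X_\nu,Y_\nu)$ to stay in a compact subset of the affine variety $\CM_n$, i.e.\ it requires bounding all generators $\mathop{\rm tr}\bigl(w(X_\nu,Y_\nu)\bigr)$ of $\CC[\CM_n]$, and the claim that the rank-one constraint ``supplies identities reducing long words to shorter ones'' in a way that yields such bounds is asserted, not proved. What you would be proving there is precisely properness of $\Upsilon$ (equivalently, finiteness of the morphism), which is the hard content of \cite[Thm.~11.16]{EG}; without it your argument only gives that the image of $\Upsilon$ contains the dense open set of pairs with at least one simple spectrum. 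So either complete that compactness estimate, or restrict the statement to the case of simple spectra (which suffices for this paper), or simply cite \cite{EG} as the paper does.
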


\begin{lemma}\label{Surjectivity}
Take an $n\times n$ matrix $Y$ with a simple spectrum $(\mu_1,\mu_2,\ldots, \mu_n)$. Fix pairwise distinct $\lambda_1,\lambda_2,\ldots,\lambda_n\in\CC$. Then there exists $(X,Y)\in\CM_n$ such that $X$ has eigenvalues $\lambda_1,\lambda_2,\ldots,\lambda_n$.
\end{lemma}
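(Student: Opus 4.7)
\smallskip

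My plan is to reduce the statement to the surjectivity of the spectrum map $\Upsilon$ cited just above, by exploiting the fact that any two diagonalizable matrices with the same eigenvalues are conjugate, together with the $PGL_n(\CC)$-invariance built into the definition of~$\CM_n$.

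First, I would apply the preceding lemma to the pair of unordered tuples $(\{\lambda_1,\ldots,\lambda_n\},\{\mu_1,\ldots,\mu_n\})\in (\CC^n/S_n)\times(\CC^n/S_n)$ to obtain some pair $(X_0,Y_0)\in \CM_n$ whose spectra are precisely these tuples. Here $Y_0$ has simple spectrum $(\mu_1,\ldots,\mu_n)$ (coinciding, as an unordered tuple, with the spectrum of the given~$Y$), so both $Y_0$ and the given~$Y$ are diagonalizable with the same eigenvalues.

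Therefore $Y_0$ and $Y$ are conjugate in $GL_n(\CC)$: there exists $g\in GL_n(\CC)$ with $gY_0g^{-1}=Y$. Setting $X:=gX_0g^{-1}$, the pair $(X,Y)=(gX_0g^{-1},gY_0g^{-1})$ represents the same $PGL_n(\CC)$-orbit as $(X_0,Y_0)$, hence lies in $\CM_n$; and conjugation preserves the spectrum, so $X$ has eigenvalues $\lambda_1,\ldots,\lambda_n$ as required.

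There is no real obstacle: the substantive content is the surjectivity of~$\Upsilon$, which is quoted. The only thing to notice is that the simple-spectrum hypothesis on~$Y$ is what lets us upgrade ``same unordered eigenvalues'' to ``conjugate matrices'', which in turn is what lets us pick the representative in the $\CM_n$-orbit whose second coordinate is \emph{exactly} the prescribed matrix~$Y$ rather than merely one conjugate to it.
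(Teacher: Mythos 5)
Your proposal is correct and follows essentially the same route as the paper: invoke the surjectivity of $\Upsilon$ to produce a point $(X_0,Y_0)$ with the prescribed spectra, use the simple-spectrum hypothesis to conjugate $Y_0$ onto the given $Y$, and transport $X_0$ by the same conjugation. No gaps.
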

\begin{proof}
Since $\Upsilon$ is surjective, there is a point $(X',Y')$ such that 
$$
\Upsilon((X',Y'))=((\lambda_1,\lambda_2,\ldots,\lambda_n),(\mu_1,\mu_2,\ldots, \mu_n)).
$$
 Since $\mu_i$ are pairwise distinct, $Y'$ is conjugate to~$Y$, that is, there exists a matrix~$A$ such that $Y=AY'A^{-1}$. Take $X=AX'A^{-1}$. Clearly, $(X,Y)$ is the same point of $\CM_n$ as $(X',Y')$ and $X$ has the prescribed spectrum.
\end{proof}

\begin{remark}
In~\cite{BEE}, the fibers of $\Upsilon_1$ over nilpotent Jordan blocks are used. The advantage is that $X^n=0$. We use the fibers over diagonalizable~$X$ (hence having simple spectra) since they can be easily described.
\end{remark}

\section{Main results}

We are ready to prove our main result.
\begin{theorem}\label{MainTheorem}
a) The group of automorphisms of a Calogero-Moser space~$\CM_n$ acts infinitely transitively.

b) The group of automorphisms of a product of Calogero-Moser spaces $\CM_{n_1}\times \CM_{n_2}\times \ldots \times \CM_{n_k}$, where $n_1,n_2,\ldots,n_k$ are pairwise distinct, acts collectively infinitely transitively. 
\end{theorem}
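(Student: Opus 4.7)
My plan is to prove the slightly stronger auxiliary statement that any tuple of points of $\CM_{n_1}\sqcup\cdots\sqcup\CM_{n_k}$ with the prescribed multiplicities $m_i$ can be moved by a single element of $G$ onto one fixed canonical tuple. Applying this to the source and target tuples of the theorem and composing the two resulting automorphisms then yields the required $G$-element. I fix as canonical tuple $(\hat X_j^{(i)},\hat Y_j^{(i)})$ whose $X$-components $\hat X_j^{(i)}$ are diagonal with spectra $\hat\Lambda_j^{(i)}$ chosen pairwise disjoint across all indices $(i,j)$, and whose $Y$-components are then determined up to their diagonal entries by Lemma~\ref{Form}; Lemma~\ref{Surjectivity} ensures such a tuple exists in the disjoint union.

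The reduction of an arbitrary tuple $(X_j^{(i)},Y_j^{(i)})$ to this canonical form proceeds in four stages, each consisting of a single element of $G$. \emph{Stage~1:} Lemma~\ref{EverythingDiagonalizable}(c) produces an element of $G$ making all $X_j^{(i)}$ and $Y_j^{(i)}$ diagonalizable, hence (by Lemma~\ref{Form}) of simple spectrum. \emph{Stage~2:} apply a single automorphism of the form~\eqref{triang_X}, $(X,Y)\mapsto(X,Y+q(X))$, with a generic polynomial $q$, so that the new $Y$-components $Y_j^{(i)}+q(X_j^{(i)})$ acquire pairwise disjoint spectra, hence pairwise coprime minimal polynomials. \emph{Stage~3:} the $X\leftrightarrow Y$ analogue of Lemma~\ref{RefineInterp}, which combines Lemma~\ref{Surjectivity} with the dual forms of Lemmas~\ref{interpolation} and~\ref{CollectiveAutomorphism}, supplies a single automorphism of the form~\eqref{triang_Y} that replaces each $X_j^{(i)}$ by a matrix with spectrum exactly $\hat\Lambda_j^{(i)}$; choosing in each orbit a diagonal representative turns this into $X_j^{(i)}=\hat X_j^{(i)}$. \emph{Stage~4:} the $X$-components of the tuple are now the pairwise-coprime diagonal matrices $\hat X_j^{(i)}$, so Lemma~\ref{RefineInterp} directly produces a polynomial $p$ for which $(X,Y)\mapsto(X,Y+p(X))$ sends the current $Y$-components to $\hat Y_j^{(i)}$, completing the reduction.

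The step I expect to be the main obstacle is Stage~2. For every pair $(i,j)\ne(i',j')$ the vanishing of the resultant of the characteristic polynomials of $Y_j^{(i)}+q(X_j^{(i)})$ and $Y_{j'}^{(i')}+q(X_{j'}^{(i')})$ is a polynomial condition on the coefficients of $q$, and only if each such polynomial is not identically zero does a generic $q$ avoid the finite union of the corresponding subvarieties. When the relevant $X$-components have disjoint spectra, the values of $q$ at their eigenvalues can be chosen independently, and Lemma~\ref{Surjectivity} (applied to each slot separately) produces a $q$ realizing any prescribed disjoint pair of target $Y$-spectra; when the two $X$-components share eigenvalues the values of $q$ at the shared eigenvalues couple the two sides, but I expect the remaining freedom in $q$, together with surjectivity of the second projection $\Upsilon_2$, to suffice for a realization with disjoint target $Y$-spectra, after which the finite-union avoidance argument proceeds as before. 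Once Stage~2 is established, Stage~3 is its formal $X\leftrightarrow Y$ dual and Stage~4 is immediate from Lemma~\ref{RefineInterp}.
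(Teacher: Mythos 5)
Your overall architecture (diagonalize everything, then separate spectra, then interpolate with the Chinese Remainder Theorem) parallels the paper's, and Stages 1, 3 and 4 are sound: Stage 1 is Lemma~\ref{EverythingDiagonalizable}, and Stages 3--4 are the paper's Step 3, correctly using the duals of Lemmas~\ref{interpolation}, \ref{CollectiveAutomorphism}, \ref{Surjectivity} once coprimality of the relevant minimal polynomials is in hand. The genuine gap is exactly where you suspect it: Stage 2. You claim that a \emph{single} elementary automorphism $(X,Y)\mapsto(X,Y+q(X))$ with generic $q$ separates all the $Y$-spectra. The resultant argument of Remark~\ref{Resultant} reduces this to exhibiting, for each pair, at least one $q$ that separates that pair; but in the critical case where two distinct points of the same $\CM_{n_i}$ have the \emph{same} diagonal $X$-component (so that, by Lemma~\ref{Form}, $Y_2=Y_1+D$ with $D$ a nonzero diagonal matrix), the automorphism adds the identical matrix $q(X)$ to both $Y$'s, and nothing you cite shows that $Y_1+q(X)$ and $Y_1+D+q(X)$ can be given disjoint spectra. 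Surjectivity of $\Upsilon_2$ controls one point at a time and says nothing about this coupled motion; trying to first separate the $X$-spectra by a map of the form~\eqref{triang_Y} runs into the mirror-image difficulty when the $Y$-components coincide in spectrum. This is not a removable technicality: it is precisely the point where the paper abandons elementary automorphisms and invokes the two-transitivity of $G$ from \cite{BEE} (Theorems 1 and 2 there), which is a deep external input. Your proposal never uses those theorems at all, which is a strong signal that the burden they carry has landed, unpaid, on Stage 2.

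For comparison, the paper's Step 2 handles each offending pair by using two-transitivity to produce a (long) composition of automorphisms of types~\eqref{triang_Y} and~\eqref{triang_X} carrying that pair to a reference pair with disjoint spectra, then inserts a parameter $t$ into each elementary factor and chooses $t$ outside a finite set so as to preserve diagonalizability and all previously achieved separations (the ``graph of edges'' bookkeeping). If you could actually prove your Stage 2 claim --- that the resultant of the characteristic polynomials of $Y_1+q(X)$ and $Y_1+D+q(X)$ is not identically zero as a function of the values of $q$ on the spectrum of $X$ --- you would obtain a genuinely simpler and more self-contained proof, since the reduction to a fixed canonical tuple in four moves is cleaner than the paper's iterative edge construction. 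As it stands, that claim is asserted (``I expect\dots to suffice'') rather than proved, so the argument is incomplete at its one essential point.
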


We prove these statements together since their proofs are almost identical.

We use the two-transitivity of the $G$-action on~$\CM_n$ and on $\CM_{n_1}\times \CM_{n_2}\times \ldots \times \CM_{n_k}$ which is established in \cite{BEE}. 
On $\CM_{n_1}\times \CM_{n_2}\times \ldots \times \CM_{n_k}$, the two-transitivity can mean two different things. First, when two points are in the same $\CM_{n_i}$, then the two-transitivity on the product follows from the two-transitivity on $\CM_{n_i}$ proved in~\cite[Theorem 1]{BEE}. Second, when two points belong to different $\CM_{n_i}$ and $\CM_{n_j}$, then it follows from~\cite[Theorem 2]{BEE}.

\begin{proof}

Step 1. Suppose that we want to map one $m$-tuple of points $(X_1, Y_1)$,  $(X_2, Y_2), \ldots$, $(X_m,Y_m)$ to another $m$-tuple of points $(X_{m+1}, Y_{m+1})$,  $(X_{m+2}, Y_{m+2}), \ldots$, $(X_{2m},Y_{2m})$. By Lemma~\ref{EverythingDiagonalizable}, there exists an automorphism making all the $4m$ matrices diagonalizable.

Step 2. 
Let us show that the spectra of all the $X_i$ can be assumed to be disjoint and, simultaneously, those of the $Y_i$ can be also assumed to be disjoint via several extra automorphisms. For this, we draw a graph on~$2m$ vertices. An edge $ij$ is drawn if and only if 

\begin{center}
\emph{($X_i$ and $X_j$ have no common eigenvalue)}\&\emph{($Y_i$ and $Y_j$ have no common eigenvalue)}. 
\end{center}

First we prove Theorem~\ref{MainTheorem}a). Let us construct the first edge. We fix two pairs $(X_1^0, Y_1^0)$ and $(X_2^0, Y_2^0)\in \CM_n$ with disjoint spectra and by two-transitivity find a sequence of polynomials such that the corresponding composition of automorphisms of the forms~\eqref{triang_Y} and~\eqref{triang_X}
 maps $(X_1,Y_1)$ and $(X_2,Y_2)$ there. 
We can regard each particular automorphism as an element of its one-parameter subgroup with $t=1$. Varying $t$ as in Remark~\ref{Resultant}, we see that for almost all~$t$ the images of $(X_1,Y_1)$ and $(X_2,Y_2)$ will have no common eigenvalue, and for almost all~$t$ all the matrices will remain diagonalizable. To obtain the first edge, we take any~$t$ satisfying all these conditions (we forbid a finite number of finite sets). 

Now let us create new edges. If $i$ and $j$ are not joined because $X_i$ and $X_j$ have a common eigenvalue, then find a simple spectrum for $X'_j$ disjoint from the spectra of all the other $X_k$. 
By Lemma~\ref{Surjectivity} there is a pair $(X'_j,Y_j)\in \CM_n$ with the prescribed spectrum for $X'_j$. Using 2-transitivity, find an automorphism mapping $(X_i,Y_i)$ to $(X_i,Y_i)$ and $(X_j,Y_j)$ to $(X'_j,Y_j)$. As above, we decompose it into automorphisms of the forms~\eqref{triang_Y} and~\eqref{triang_X} and regard it as an element of a one-parameter family of automorphisms with $t=1$ (not a subgroup!). 
We want all the matrices to remain diagonalizable, this forbids a finite number of values of~$t$. We do not want to break edges that were constructed earlier, so for each old edge $kl$, as we did in Remark~\ref{Resultant}, we express the condition
 that
 
 \emph{($X_k$ and $X_l$ have no common eigenvalue)}\&\emph{($Y_k$ and $Y_l$ have no common eigenvalue)}
 
\noindent as a polynomial condition on~$t$ that holds for $t=0$. We also forbid a finite number of $t$s checking that the spectrum of $X'_j$ is disjoint from the spectra of the images of all the other $X_k$, this was true for $t=0$. All in total, this is a finite number of restrictions on~$t$, and we can choose any other $t\in\CC$.  Then we perform  the same to disconnect spectra of $Y_i$ and $Y_j$. We obtain an edge between $i$ and $j$. We construct new edges in this way until we get a complete graph (i.e., any two vertices are joined by an edge). We further assume that all the spectra of $X_i$ are disjoint and all the spectra of $Y_j$ are disjoint.

Step 2 for Theorem~\ref{MainTheorem}b) is proved similarly. When we need 2-transi\-tivity for points in one component, we rely on~\cite[Theorem 1]{BEE}, and when we need it for two points from different components, we use~\cite[Theorem 2]{BEE}.
\medskip

Step 3. To obtain the $m$-transitivity, let us take two $m$-tuples of points $(X_1, Y_1)$,  $(X_2, Y_2), \ldots$, $(X_m,Y_m)$ and $(X_{m+1}, Y_{m+1})$,  $(X_{m+2}, Y_{m+2}), \ldots$, $(X_{2m},Y_{2m})$ on~$\CM_n$ and perform on this $2m$-tuple both Steps 1 and 2. We denote the new points by $(\tilde X_1, \tilde Y_1)$,  $(\tilde X_2, \tilde Y_2), \ldots$, $(\tilde X_m,\tilde Y_m)$ and $(\tilde X_{m+1}, \tilde Y_{m+1})$,  $(\tilde X_{m+2}, \tilde Y_{m+2}), \ldots$, $(\tilde X_{2m},\tilde Y_{2m})$.  We also denote by~$g$ the corresponding element of~$G$, i.e., $g.(X_i,Y_i)=(\tilde X_i, \tilde Y_i)$ for $i=1,2,\ldots,2m$.  
Let us choose representatives with all the $\tilde X_i$ diagonal.

Now we need the interpolation polynomial. We know how a triangular automorphism $(X,Y)\mapsto (X, Y+p(X))$ looks like: 
the non-diagonal elements of all the $\tilde Y_i$ do not change, and the $k$th diagonal element of the corresponding $\tilde Y_i$ increases by $p(\lambda_{ki})$, where $\lambda_{ki}$ is the $k$th diagonal element of the matrix $\tilde X_i$. 
\medskip

Using Lemma~\ref{Surjectivity}, find $m$ intermediate points of $\CM_n$ 
$$
(\tilde X_1, Y''_1), \mbox{ where } Y''_1 \mbox{ has the same spectrum as }\tilde Y_{m+1};
$$ 
$$
\ldots;
$$ 
$$
(\tilde X_m,Y''_m),\mbox{ where } Y''_m \mbox{ has the same spectrum as }\tilde Y_{2m}.
$$ 
By Lemma~\ref{RefineInterp}, there is an automorphism $Y\mapsto Y+p(X)$ which maps each $\tilde Y_i$, $1\leqslant i \leqslant m$, to the chosen matrix $Y''_i$. 

Now for each point choose a representative with $Y$ diagonal and make the same interpolation with $X$ and $Y$ reversed. 

Let us denote by~$g_1$ the corresponding element of~$G$, i.e., such that $g_1.(\tilde X_i, \tilde Y_i)=(\tilde X_{m+i},\tilde Y_{m+i})$ for $i=1,2,\ldots,m$. Then $g^{-1}g_1g$ maps $(X_1, Y_1)$,  $(X_2, Y_2), \ldots$, $(X_m,Y_m)$ to $(X_{m+1}, Y_{m+1})$,  $(X_{m+2}, Y_{m+2}), \ldots$, $(X_{2m},Y_{2m})$.
\end{proof}

\emph{Final remarks.}
For a variety~$X$, one can generate a group by all the one-parameter unipotent subgroups of $\mathrm{Aut}(X)$. This subgroup denoted by $\SAut(X)$ 
is treated in~\cite{KZ, AKZ, AFKKZ, AFKKZ2}. It is shown in~\cite{AFKKZ} that infinite transitivity of~$\SAut(X)$ on the smooth locus $\reg(X)$ for $\dim X\geqslant 2$ is equivalent to simple transitivity and is equivalent to \emph{flexibility property} which means that the tangent space $T_xX$ in every smooth point $x\in X$ is generated by tangent vectors to the orbits of one-parameter unipotent subgroups. We fix attention that this fact is not easily applicable to~$\CM_n$ since natural automorphisms $(X,Y)\mapsto (X+p(Y), Y)$ and $(X,Y)\mapsto (X, Y+q(X))$ do not come with all their (one-parameter unipotent) rescalings. 

On the other hand, it is not known whether the group~$G$ coincides with $\SAut(\CM_n)$.

\bigskip

\end{document}